\documentclass[12pt]{article}
\usepackage{amsmath}
\usepackage{amssymb}
\usepackage{amsthm}
\usepackage[english]{babel}
\usepackage{nccmath}
\textwidth 160 mm
\textheight 220 mm
\baselineskip=22pt
\topmargin -0.45 in
\oddsidemargin -0.1 cm
\evensidemargin -0.1 cm

\newtheorem{theorem}{Theorem}
\newtheorem{lemma}{Lemma}
\newtheorem{remark}{Remark}

\begin{document}
\begin{center}
\section*{The Hardy-Littlewood theorem for double Fourier-Haar series from Lebesgue spaces $L_{\bar{p}}[0,1]$ with mixed metric and from net spaces $N_{\bar{p}, \bar{q}}(M)$}
\bf{A.N.~Bashirova, E.D.Nursultanov}
\end{center} 

\noindent {\bf Abstract.} In terms of the Fourier-Haar coefficients, a criterion is obtained for the function $f (x_1,x_2)$ to belong to the net space $N_{\bar{p},\bar{q}}(M)$ and to the Lebesgue space $L_{\bar{p}}[0,1]^2$ with mixed metric, where $1<\bar{p}<\infty$, $0<\bar{q}\leq\infty$, $\bar{p}=(p_1,p_2)$, $\bar{q}=(q_1,q_2)$, $M$ is the set of all rectangles in $\mathbb{R}^2$. We proved the Hardy-Littlewood theorem for multiple Fourier-Haar series.

\noindent {\bf Key words:} net space, Lebesgue space, anisotropic space, Fourier series, Haar system.

\noindent {\bf AMS Mathematics Subject Classification:} (42B05, 42B35, 46B70)

\section{Introduction}

In studying the relationship between the integrability of a function and the summability of its Fourier coefficients, the most striking example is the Parseval equality
$$
\int^1_0|f(x)|^2dx=\sum^\infty_{k=1}|a_k|^2,
$$
where $a_k$ are Fourier coefficients by trigonometric system.

In the case, when $f\in L_p$, $p\neq2$, here the Hardy-Littlewood inequalities hold: 
if $2\leq p<\infty$, then
$$
\left\|f\right\|_{L_p}^p\leq c_1\sum_{k=1}^{\infty}k^{p-2}|a_k|^p,
$$
if  $1\leq p\leq 2$, then
$$
c_2\sum_{k=1}^{\infty}k^{p-2}|a_k|^p\leq\left\|f\right\|_{L_p}^p.
$$

For a function $f$ from $L_p$, the lower bounds for $p>2$ and the upper bounds for $1<p\leq2$ are proved only under additional conditions.

Here we know the Hardy-Littlewood theorem \cite{Zigmund} for trigonometric series:

Let $1<p<\infty$ and $f\sim\sum_{k=0}^{\infty}a_k coskx$. If $\left\{a_k\right\}_{k=0}^\infty$ is  monotonically non-increasing sequence, or $f$ is monotone function, then for $f\in L_p[0,\pi]$ it is necessary and sufficient
$$
\sum_{k=0}^{\infty}k^{p-2}|a_k|^p<\infty,
$$
and the relation is fulfilled
$$
\left\|f\right\|_{L_p}^p\asymp\sum_{k=0}^{\infty}k^{p-2}|a_k|^p.
$$

As we can see, the conditions for for monotone functions and functions with monotone coefficients to belong to the space $L_p$  are the same, namely, the convergence of the series:
$$
\sum_{k=0}^{\infty}k^{p-2}|a_k|^p.
$$

For series by Haar sistem the situation is different. P.L. Ul'yanov in \cite{Uliyanov} proved, that if the Fourier-Haar coefficients $\left\{a_k\right\}_{k=1}^\infty$ are monotonous, then in order to the function $f\in L_p[0,1]$ at $1<p<\infty$ it is necessary and sufficient that $\left\{a_k\right\}_{k=1}^\infty\in l_2$, i.e. that the series $\sum_{k=1}^\infty|a_k|^2$ be converged.\\

Nursultanov E.D. and Aubakirov T.U. in paper \cite{Nursultanov1} proved the following statement:

{\it Let $1<p<\infty$, $f$ is a monotone function. Then in order to $f\in L_p[0,1]$ it is necessary and sufficient that for the sequence of its Fourier-Haar coefficients $\left\{a_k^j\right\}_{k=0, j=1}^{\infty, 2^k}$ the condition was met:
\begin{equation*}
\left(\sum_{k=0}^\infty\left(2^{k\left(\frac{1}{2}-\frac{1}{p}\right)}\sup_{1\leq j\leq 2^k}|a_k^j|\right)^{p}\right)^\frac{1}{p}<\infty.
\end{equation*}
}

For multiple trigonometric series, analogues of the Hardy-Littlewood theorem were obtained by F. Moritz \cite{Moricz}, M.I. Dyachenko \cite{Dyachenko1, Dyachenko2}. 

If the coefficients $a=\{a_{k_1,k_2}\}^{\infty,\infty}_{k_1=0 k_2=0}$ are monotonic by each variable index $k_1, k_2$, then as showed M.I.Dyachenko \cite{Dyachenko1}, the convergence of the number series 
$$
\sum^\infty_{k_1=1}\sum^\infty_{k_2=1}k_1^{p-2}k_2^{p-2}|a_{k_1 k_2}|^p
$$ 
is equivalent to $f\in L_p(\mathbb{T}^2)$ only when $\frac{4}{3}<p<\infty$ (in the case $2\leq n$, for $\frac{2n}{n+1}<p<\infty$).

The goal of our work is to obtain the Hardy-Littlewood theorem for multiple Fourier-Haar series in Lebesgue spaces $L_{\bar{p}}[0,1]^2$ with mixed metric and in anisotropic net spaces $N_{\bar{p},\bar{q}}(M)$.

\section{Main results}

The Haar system is a system of functions $\chi=\{\chi_n(x)\}^\infty_{n=1}$, $x\in[0,1]$, in which $\chi_1(x)\equiv 1$, and the function $\chi_n(x)$ where $n=2^k+j$, where $k=0,1,\ldots$, $j=1,2,\ldots,2^k$ is defined as:
$$
\chi_n(x)=\chi^j_k(x)=\left\lbrace
\begin{aligned}
    & 2^\frac{k}{2},\;\;  \frac{2j-2}{2^{k+1}}<x<\frac{2j-1}{2^{k+1}} \\
    & -2^\frac{k}{2}, \;\; \frac{2j-1}{2^{k+1}}<x<\frac{2j}{2^{k+1}} \\
    & 0, \;\; x\notin\left(\frac{j-1}{2^k};\frac{j}{2^k}\right)\\
\end{aligned}
\right.
$$

For the function  $f(x_1,x_2) \in L_1[0,1]^2$ consider its Fourier-Haar series
$$
f(x_1,x_2) \sim \sum_{k_1=0}^{\infty}\sum_{k_2=0}^{\infty}\sum_{j_1=1}^{2^{k_1}}\sum_{j_2=1}^{2^{k_2}}a_{k_1,k_2}^{j_1,j_2}(f)\chi_{k_1}^{j_1}(x_1)\chi_{k_2}^{j_2}(x_2),
$$
where $a_{k_1,k_2}^{j_1,j_2}(f)$ are the Fourier-Haar coefficients of the function $f(x_1,x_2)$ are defined as follows:
$$
a^{j_1,j_2}_{k_1,k_2}=\int_0^1\int_0^1 f(x_1,x_2)\chi_{k_1}^{j_1}(x_1)\chi_{k_2}^{j_2}(x_2)dx_1dx_2. 
$$

For $\bar{\sigma}=(\sigma_1,\sigma_2)\in\mathbb{R}^2$, $1\leq\bar{q}\leq\infty$ define space $l_{\bar{q}}^{\bar{\sigma}}(l_\infty)$, as a set of all sequences $a=\{a_{k_1k_2}^{j_1j_2}: k_i\in\mathbb{Z}_+,1\leq j\leq2^{k_i}, i=1,2\}$, for which is finite the norm 
$$
\|a\|_{l^{\bar{\sigma}}_{\bar{q}}(l_\infty)}=\left(\sum^\infty_{k_2=0}\left(\sum^\infty_{k_1=0}\left(2^{\sigma_1k_1+\sigma_2k_2}\sup_{{}^{1\leq j_i\leq\infty}_{i=1,2}}|a_{k_1k_2}^{j_1j_2}|\right)^{q_1}\right)^\frac{q_2}{q_1}\right)^\frac{1}{q_2},
$$

hereinafter expression $\left(\sum_{k=0}^\infty{b_k}^q\right)^\frac{1}{q}$ in the case, when $q=\infty$, is understood as $\sup_{k\geq 0}b_k$.

Let $M$ is set of all rectangles $Q=Q_1\times Q_2$ from $\mathbb{R}^2$, for function $f(x_1,x_2)$ integrable on each set $Q\in M$ define 
$$
\displaystyle\bar{f}(t_1,t_2; M)=\sup_{|Q_i|\geq t_i}\frac{1}{|Q_1||Q_2|}\left|\int_{Q_1}\int_{Q_2}{f(x_1,x_2)dx_1dx_2}\right|,\;\;\;\;t_i>0,
$$
where $|Q_i|$ is the $Q_i$ segment length.

Let $0<\bar{p}=(p_1,p_2)<\infty$, $0<\bar{q}=(q_1,q_2)\leq\infty$. By $N_{\bar{p},\bar{q}}(M)$ denote the set of all functions $f(x_1,x_2)$, for which
$$
\left\|f\right\|_{N_{\bar{p},\bar{q}}(M)}= \left(\int^{\infty}_{0}\left(\int^{\infty}_{0}\left(t_1^{\frac{1}{p_1}}t_2^{\frac{1}{p_2}}\bar{f}(t_1,t_2; M)\right)^{q_1}\frac{dt_1}{t_1}\right)^{\frac{q_2}{q_1}}\frac{dt_2}{t_2}\right)^\frac{1}{q_2}<\infty,
$$
hereinafter, when $q=\infty$, expression $\left(\int^\infty_0\left(\varphi(t)\right)^q\frac{dt}{t}\right)^\frac{1}{q}$ is understood as $\sup_{t>0}\varphi(t)$.

Spaces of this type were introduced and studied in \cite{Nursultanov2} and were called net spaces. Net spaces are an important research tool in the theory of Fourier series, in operator theory and in other directions.

\begin{theorem}\label{NPQ}
Let $1<\bar{p}<\infty$, $0<\bar{q}\leq\infty$, $\bar{\sigma}=\frac{1}{2}-\frac{1}{\bar{p}}$, $M$ is 
the set of all rectangles in $[0,1]^2$. Then, for $f\in N_{\bar{p},\bar{q}}(M)$ it is necessary and sufficient that the sequence of its Fourier-Haar coefficients $a=\{a_{k_1k_2}^{j_1j_2}: k_i\in\mathbb{Z}_+,1\leq j\leq2^{k_i}, i=1,2\}$ belonged to the space $l_{\bar{q}}^{\bar{\sigma}}(l_\infty)$, and is fulfilled the relation
\begin{equation}\label{T1}
\|f\|_{N_{\bar{p},\bar{q}}(M)}\asymp \|a\|_{l^{\bar{\sigma}}_{\bar{q}}(l_\infty)}.
\end{equation}
\end{theorem}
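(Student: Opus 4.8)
My plan is to prove both inclusions through the single structural identity that relates a Fourier--Haar coefficient to a four-fold difference of averages. Since $\chi_{k_1}^{j_1}(x_1)\chi_{k_2}^{j_2}(x_2)$ equals $\pm 2^{(k_1+k_2)/2}$ on the four dyadic quadrants $Q_{\pm\pm}$ of the dyadic rectangle $I_{k_1}^{j_1}\times I_{k_2}^{j_2}$, one has $a_{k_1k_2}^{j_1j_2}=2^{(k_1+k_2)/2}\bigl(\int_{Q_{++}}f-\int_{Q_{+-}}f-\int_{Q_{-+}}f+\int_{Q_{--}}f\bigr)$, where each quadrant has area $2^{-(k_1+k_2+2)}$ and sides $2^{-(k_1+1)},2^{-(k_2+1)}$. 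The companion fact, used in the opposite direction, is that for any rectangle $Q=Q_1\times Q_2$ the average $\frac{1}{|Q|}\int_Q f$ expands, by orthogonality, as $\sum a_{k_1k_2}^{j_1j_2}\,\frac{1}{|Q_1|}\int_{Q_1}\chi_{k_1}^{j_1}\cdot\frac{1}{|Q_2|}\int_{Q_2}\chi_{k_2}^{j_2}$. Throughout I will pass between the continuous net integral and its dyadic discretization $t_i\approx 2^{-k_i}$, which is legitimate because $\bar f(t_1,t_2;M)$ is non-increasing in each $t_i$, so the contribution of each dyadic block $[2^{-k_i-1},2^{-k_i}]$ under $\tfrac{dt_i}{t_i}$ is comparable to the endpoint value. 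The arithmetic that makes the two norms match is $\sigma_i-\tfrac12=-\tfrac1{p_i}$: the weight $2^{\sigma_1k_1+\sigma_2k_2}$ combined with the factor $2^{-(k_1+k_2)/2}$ coming from the coefficient scaling produces exactly $2^{-k_1/p_1-k_2/p_2}$, which is $t_1^{1/p_1}t_2^{1/p_2}$ at $t_i=2^{-k_i}$.

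For \emph{necessity} ($\|a\|_{l^{\bar\sigma}_{\bar q}(l_\infty)}\lesssim\|f\|_{N_{\bar p,\bar q}(M)}$) I would use the four-quadrant identity to estimate $|a_{k_1k_2}^{j_1j_2}|\le 2^{(k_1+k_2)/2}\cdot 4\cdot 2^{-(k_1+k_2+2)}\bar f(2^{-(k_1+1)},2^{-(k_2+1)};M)=2^{-(k_1+k_2)/2}\bar f(2^{-(k_1+1)},2^{-(k_2+1)};M)$, since each quadrant is an admissible rectangle in the supremum defining $\bar f$. Multiplying by $2^{\sigma_1k_1+\sigma_2k_2}$, taking the supremum over $j_1,j_2$ (which only raises the right-hand side), and summing in the iterated $l_{q_1}$-then-$l_{q_2}$ order, the resulting sum is, by the monotonicity-based discretization above, bounded by the net integral, giving the claim.

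For \emph{sufficiency} ($\|f\|_{N_{\bar p,\bar q}(M)}\lesssim\|a\|_{l^{\bar\sigma}_{\bar q}(l_\infty)}$) I would bound $\bar f(t_1,t_2;M)$ uniformly over rectangles with $|Q_i|\ge t_i$ by controlling $\frac{1}{|Q_i|}\bigl|\int_{Q_i}\chi_{k_i}^{j_i}\bigr|$ in two regimes. On coarse scales $2^{-k_i}\ge|Q_i|$ the Haar function is essentially constant of size $2^{k_i/2}$ on $Q_i$, so this quantity is $\lesssim 2^{k_i/2}$; on fine scales $2^{-k_i}\le|Q_i|$ only the two boundary bumps of $Q_i$ survive and each carries $L_1$-mass $2^{-k_i/2}$, so the quantity is $\lesssim 2^{-k_i/2}/|Q_i|\le 2^{-k_i/2}/t_i$. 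Replacing every coefficient by $\sup_{j}|a_{k_1k_2}^{j_1j_2}|=:b_{k_1k_2}$, this yields, at $t_i=2^{-K_i}$, a bound of $\bar f$ by a sum of products of two one-dimensional operators, the coarse part $\sum_{k_i\le K_i}2^{k_i/2}b$ and the fine part $2^{K_i}\sum_{k_i>K_i}2^{-k_i/2}b$ in each variable. Writing $c_{k_1k_2}=2^{(k_1+k_2)/2}b_{k_1k_2}$ so that the target norm is $\|2^{-k_1/p_1-k_2/p_2}c_{k_1k_2}\|_{l_{\bar q}}$, the coarse part becomes the direct discrete Hardy operator $2^{-K_i/p_i}\sum_{k_i\le K_i}c$ and the fine part the dual Hardy operator $2^{K_i/p_i'}\sum_{k_i>K_i}2^{-k_i/p_i'}(\cdot)$; both are bounded on $l_{q_i}$ with geometric weights precisely because $0<1/p_i<1$, i.e.\ $1<p_i<\infty$.

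The main obstacle is this sufficiency step: establishing the two-dimensional Hardy inequality in the mixed-norm $l_{q_1}(l_{q_2})$ setting for the full range $0<\bar q\le\infty$, and in particular carrying the fine-scale (boundary) contributions through the iteration without losing the exact power balance. The direct and dual one-dimensional inequalities must be iterated respecting the inner-$k_1$/outer-$k_2$ nesting, which for $q_i<1$ cannot rely on Minkowski's inequality and instead needs the monotone (geometric-weight) form of Hardy's inequality applied levelwise. It is exactly the simultaneous validity of both the direct and the dual inequality that forces, and is guaranteed by, the hypothesis $1<\bar p<\infty$, explaining why the Haar/net-space formulation recovers the clean range $p>1$ rather than the restricted range encountered for multiple trigonometric series.
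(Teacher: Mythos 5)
Your proposal is correct in outline, and it takes a genuinely different route from the paper. Both arguments rest on the same two structural facts you identify: the four-quadrant representation of the coefficients, giving $|a^{j_1j_2}_{k_1k_2}|\le 2^{-(k_1+k_2)/2}\bar f(2^{-k_1-1},2^{-k_2-1};M)$, and the boundary-bump estimate $\left|\int_{Q_i}\chi^{j_i}_{k_i}\right|\le 2^{k_i/2}\min\left(|Q_i|,2^{-k_i}\right)$ with at most four surviving pairs $(j_1,j_2)$. The difference lies in how these are promoted to the full scale of $\bar q$. The paper turns them into endpoint operator bounds only: $Tf=a(f)$ maps $N_{\bar p,\bar\infty}(M)$ into $l^{\bar\sigma}_{\bar\infty}(l_\infty)$, and $Ta=S_{N_1,N_2}(a)$ maps $l^{\bar\sigma}_{\bar 1}(l_\infty)$ into $N_{\bar p,\bar\infty}(M)$, for the four corner choices of $(p_1,p_2)$; it then invokes the anisotropic interpolation theorems (Theorem~\ref{BKA} for the sequence spaces and Theorem~\ref{IT}, proved in \cite{BashNurs}, for the net spaces), plus completeness of $N_{\bar p,\bar q}(M)$ to pass from the polynomials $S_{N_1,N_2}$ to $f$. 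You instead sum the same pointwise bounds directly: dyadic discretization of the net norm via monotonicity of $\bar f$ for necessity, and for sufficiency a tensor product of direct and dual discrete Hardy operators with geometric kernels $2^{-m/p_i}$, $2^{-m/p_i'}$, which are bounded on weighted $l_q$ for every $0<q\le\infty$ precisely when $1<p_i<\infty$. What your route buys: it is self-contained and elementary (no interpolation machinery, no appeal to the separate net-space interpolation result), and it handles $\bar q<1$ transparently via the $q$-power triangle inequality applied levelwise — a point worth stressing, since Theorem~\ref{IT} is stated only for $1\le\bar q\le\infty$, so the interpolation route as written covers only that range although the theorem is claimed for $0<\bar q\le\infty$. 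What the paper's route buys: the endpoint estimates are nearly trivial, and the interpolation theorems are reusable for other operators acting between these scales of spaces. Two details you should still write out to make your sketch complete: (i) the term-by-term expansion of $\frac{1}{|Q|}\int_Q f$ needs justification for $f\in L_1[0,1]^2$, either via $L_1$-convergence of the rectangular dyadic partial sums (two-parameter martingale convergence), or, as the paper does, by proving the uniform bound for $S_{N_1,N_2}$ and passing to the limit; (ii) the mixed-norm $l_{q_2}(l_{q_1})$ Hardy lemma itself, iterating the one-dimensional inequality through the inner/outer nesting, including the $q_i<1$ case you correctly flag. Both are standard.
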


Note, that for spaces $N_{\bar{p}, \bar{q}}(M)$, the relation \eqref{T1} holds without any additional conditions on the function $f$ and its Fourier coefficients. Thus, for the net spaces $N_{\bar{p}, \bar{q}}(M)$, an analogue of Parseval's equality holds for all $1<\bar{p}<\infty$.

Let $0<\bar{p}\leq\infty$. The space $L_{\bar{p}}[0,1]^2$, called the Lebesgue space with a mixed metric, is defined as the set of functions $f$ measurable on $[0,1]^2$ for which is finite quantity
$$
\|f\|_{L_{\bar{p}}[0,1]^2}:=\left(\int_0^1\left(\int_0^1\left|f(x_1,x_2)\right|^{p_1}dx_1\right)^\frac{p_2}{p_1}dx_2\right)^\frac{1}{p_2}.
$$

The function $f(x_1,x_2)$ is called monotonically non-increasing by each variable, if  for $0\leq y_1\leq x_1$ and $0\leq y_2\leq x_2$ the inequality is holds
$$
f(x_1,x_2)\leq f(y_1,y_2).
$$

\begin{theorem}\label{LP}
Let $1<\bar{p}<\infty$, $\bar{\sigma}=\frac{1}{2}-\frac{1}{\bar{p}}$, $f(x_1,x_2)$ is monotonically non-increasing function by each variable. Then, for $f\in L_{\bar{p}}[0,1]^2$ it is necessary and sufficient that the sequence of its Fourier-Haar coefficients $a=\{a_{k_1k_2}^{j_1j_2}: k_i\in\mathbb{Z}_+,1\leq j\leq2^{k_i}, i=1,2\}$ belonged to the space $l_{\bar{p}}^{\bar{\sigma}}(l_\infty)$, and is fulfilled the relation
\begin{equation*}
\|f\|_{L_{\bar{p}}[0,1]^2}\asymp \|a\|_{l^{\bar{\sigma}}_{\bar{p}}(l_\infty)}.
\end{equation*}
\end{theorem}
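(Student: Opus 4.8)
The plan is to deduce Theorem \ref{LP} from Theorem \ref{NPQ} by specializing $\bar q=\bar p$ there (admissible since $1<\bar p<\infty$), which already gives $\|f\|_{N_{\bar p,\bar p}(M)}\asymp\|a\|_{l^{\bar\sigma}_{\bar p}(l_\infty)}$. It then suffices to prove that, for a function non-increasing in each variable, the net norm and the mixed Lebesgue norm are equivalent, i.e. $\|f\|_{N_{\bar p,\bar p}(M)}\asymp\|f\|_{L_{\bar p}[0,1]^2}$. I carry out the argument for the essential case $f\ge 0$.

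The first and decisive step is to evaluate the net maximal function $\bar f(\cdot\,;M)$ explicitly. For $f\ge 0$ non-increasing in each variable and $t_1,t_2\in(0,1]$, I claim the supremum defining $\bar f(t_1,t_2;M)$ is attained at the corner rectangle $[0,t_1]\times[0,t_2]$, so that
$$
\bar f(t_1,t_2;M)=\frac{1}{t_1t_2}\int_0^{t_1}\int_0^{t_2}f(s_1,s_2)\,ds_1\,ds_2=:(Pf)(t_1,t_2).
$$
Indeed, for a fixed interval $Q_2$ the function $s_1\mapsto\int_{Q_2}f(s_1,s_2)\,ds_2$ is non-increasing, and the mean of a non-increasing nonnegative function over an interval of length at least $t_1$ is largest for the leftmost interval of minimal length $[0,t_1]$ (the mean decreases both when the interval is shifted right and when it is lengthened from the left); applying the same reasoning in the second variable yields the claim. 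For $t_1>1$ or $t_2>1$ no admissible rectangle exists and $\bar f=0$, so the integrations in the net norm run over $(0,1]^2$. Making this identity rigorous, including the disappearance of the absolute value and the treatment of possible sign changes, is the main obstacle.

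Substituting $\bar f=Pf$ into the definition with $\bar q=\bar p$, the weights $t_i^{1/p_i}$ cancel against the measures $dt_i/t_i$: since $(t_1^{1/p_1}t_2^{1/p_2})^{p_1}\,\tfrac{dt_1}{t_1}=t_1\,t_2^{p_1/p_2}\,\tfrac{dt_1}{t_1}=t_2^{p_1/p_2}\,dt_1$, a direct computation gives
$$
\|f\|_{N_{\bar p,\bar p}(M)}=\left(\int_0^1\left(\int_0^1\big((Pf)(t_1,t_2)\big)^{p_1}dt_1\right)^{p_2/p_1}dt_2\right)^{1/p_2}=\|Pf\|_{L_{\bar p}[0,1]^2}.
$$
Thus the whole matter reduces to the two-sided estimate $\|Pf\|_{L_{\bar p}[0,1]^2}\asymp\|f\|_{L_{\bar p}[0,1]^2}$.

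For the lower bound, monotonicity gives $(Pf)(t_1,t_2)\ge f(t_1,t_2)$ pointwise, since every value of $f$ on $[0,t_1]\times[0,t_2]$ is at least $f(t_1,t_2)$; hence $\|Pf\|_{L_{\bar p}}\ge\|f\|_{L_{\bar p}}$. For the upper bound I would invoke the classical Hardy inequality, valid because $p_1,p_2>1$, applied iteratively in the mixed norm: fixing $x_2$ and using one-dimensional Hardy in $x_1$ bounds the inner $L^{p_1}_{x_1}$ norm of $P_1(P_2f)$ by $C_{p_1}$ times that of $P_2f$; then, using Minkowski's integral inequality to move the $x_2$-average outside the $L^{p_1}_{x_1}$ norm, one further application of one-dimensional Hardy in $x_2$ yields $\|Pf\|_{L_{\bar p}}\le C_{p_1}C_{p_2}\|f\|_{L_{\bar p}}$. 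Combining the two bounds with the weight identity and Theorem \ref{NPQ} gives $\|f\|_{L_{\bar p}[0,1]^2}\asymp\|f\|_{N_{\bar p,\bar p}(M)}\asymp\|a\|_{l^{\bar\sigma}_{\bar p}(l_\infty)}$, which is the assertion.
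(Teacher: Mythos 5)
Your proposal is correct (modulo the reduction to $f\ge 0$, which you flag explicitly and which the paper's own proof also tacitly assumes), but it is organized differently from the paper's argument. The direction $\|f\|_{L_{\bar p}[0,1]^2}\le c\,\|a\|_{l^{\bar\sigma}_{\bar p}(l_\infty)}$ is essentially the same in both: monotonicity gives $f(x_1,x_2)\le\bar f(x_1,x_2;M)$ pointwise, the weight cancellation at $\bar q=\bar p$ identifies the net norm with an $L_{\bar p}$ norm of $\bar f$, and Theorem \ref{NPQ} finishes. The divergence is in the opposite direction. The paper never identifies $\bar f$ with the corner average $Pf$; instead it estimates each Fourier--Haar coefficient directly from monotonicity, $|a^{j_1,j_2}_{k_1,k_2}|\le 2^{(k_1+k_2)/2}\int_0^{2^{-k_2}}\int_0^{2^{-k_1}}f\,dx_1dx_2$, and then deduces $\|a\|_{l^{\bar\sigma}_{\bar p}(l_\infty)}\le c\,\|f\|_{L_{\bar p}[0,1]^2}$ via Minkowski's inequality and its Lemma \ref{LT2}, a dyadic-discrete form of Hardy's inequality ($\|\varphi^{**}\|_{L_p}\asymp\|\varphi\|_{L_p}$); thus the paper uses only the synthesis half of Theorem \ref{NPQ}. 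You instead use both halves of Theorem \ref{NPQ} and concentrate all the work in the equivalence $\|f\|_{N_{\bar p,\bar p}(M)}\asymp\|f\|_{L_{\bar p}[0,1]^2}$ on the cone of nonnegative functions monotone in each variable, proved by the exact identification $\bar f(t_1,t_2;M)=(Pf)(t_1,t_2)$ together with the iterated one-dimensional Hardy inequality (with Minkowski's integral inequality to pass the second-variable average through the inner norm); I checked these steps and they are sound. Both routes ultimately rest on Hardy's inequality, but they package it differently: your version buys a cleaner structural statement --- on nonnegative monotone functions the $N_{\bar p,\bar p}(M)$ and $L_{\bar p}$ norms are equivalent, so Theorem \ref{LP} becomes a pure corollary of Theorem \ref{NPQ} --- while the paper's direct coefficient estimate avoids having to justify that the supremum defining $\bar f$ is attained at the corner rectangle (it only needs one-sided bounds) and keeps the Hardy input in the elementary Lemma \ref{LT2}.
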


\begin{remark}
The condition of a monotonically non-increasing function in \ref{LP} theorem can be replaced by monotonicity in each variable.
\end{remark}

\section{Interpolation theorems for anisotropic spaces}

In this section, we will consider an interpolation method for anisotropic spaces from the work \cite{Nursultanov4}. This method is based on ideas from the works of Sparr G. \cite{Sparr}, Fernandez D.L. \cite {Fernandez1, Fernandez2, Fernandez3} and others \cite {Cobus}.

Let ${\bf A_0} = (A_1^{0},A_2^{0}),\; {\bf A_1}=(A_1^1, A_2^1)$ are two anisotropic spaces, $E=
\{\varepsilon=(\varepsilon_1,\varepsilon_2):\varepsilon_i=0$, or $\varepsilon_i=1,\;\;i=1,2\}$. For arbitrary $\varepsilon\in E$ define the space ${\bf{A}_\varepsilon}=(A_1^{\varepsilon_1},A_2^{\varepsilon_2})$ with norm
$$
\|a\|_{\bf{A}_\varepsilon} = \|\|a\|_{A_1^{\varepsilon_1}}\|_{A_2^{\varepsilon_2}}.
$$

A pair of anisotropic spaces ${\bf{A_0}}=(A_1^0, A_2^0)$, ${\bf{A_1}}=(A_1^1, A_2^1)$
will be called compatible, if there is a linear Hausdorff space, containing as subsets the spaces $\bf{A_\varepsilon}$, $\varepsilon\in E$.

Let $0<\bar\theta=(\theta_1,\theta_2)< 1$, $0<\bar{q}=(q_1,q_2)\leq\infty$.
By ${\bf A}_{\bar{\theta},\bar{q}}=({\bf A_0}, {\bf A_1})_{\bar{\theta},\bar{q}}$ we 
denote the linear subset $\sum_{\varepsilon\in E}{\bf{A}}_\varepsilon$, for whose elements it is true:
$$
\|a\|_{A_{\bar{\theta},\bar{q}}} =\left(\int^{\infty}_{0}\left(\int^{\infty}_{0}\left(t_1^{-\theta_1}t_2^{-\theta_2}K(t_1,t_2)\right)^{q_1}\frac{dt_1}{t_1}\right)^\frac{q_2}{q_1}\frac{dt_2}{t_2}\right)^\frac{1}{q_2}<\infty,
$$
where
$$
K(t,a;{\bf A_0},{\bf A_1}) = \inf\{\sum_{\varepsilon\in E} t^{\varepsilon}\|a_{\varepsilon}\|_{{\bf{A}}_\varepsilon} \;:\; a=\sum_{\varepsilon\in E}a_\varepsilon,\;a_\varepsilon\in {\bf{A}}_\varepsilon\},
$$
where $t^{\varepsilon} = t_1^{\varepsilon_1} t_2^{\varepsilon_2}$.

We need a theorem from \cite{Bekmaganbetov}, which we formulate for our case, when $A =l_\infty$.

\begin{theorem}\label{BKA}
Let $\bar{\sigma}_0=(\sigma_1^0, \sigma_2^0)>\bar{\sigma}_1=(\sigma_1^1,\sigma_2^1)$, $0\leq\bar{q}, \bar{q}_0, \bar{q}_1\leq\infty$, $0<\bar{\theta}=(\theta_1,\theta_2)<1$ then will be true the equality
$$
\left(l_{\bar{q}_0}^{\bar{\sigma}_0}(l_\infty), l_{\bar{q}_1}^{\bar{\sigma}_1}(l_\infty)\right)_{\bar{\theta},\bar{q}}=l_{\bar{q}}^{\bar{\sigma}}(l_\infty),
$$
where $\bar{\sigma}=(1-\bar{\theta})\bar{\sigma}_0+\bar{\theta}\bar{\sigma}_1$.
\end{theorem}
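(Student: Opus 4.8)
The plan is to read Theorem \ref{BKA} as an anisotropic real-interpolation statement and reduce it, via the lattice structure of $l_\infty$ and the product form of the weights, to two independent one-dimensional weighted interpolation identities. The first move is to remove the inner $l_\infty$. Setting $b_{k_1 k_2} = \sup_{j_1,j_2}|a_{k_1 k_2}^{j_1 j_2}|$, each norm $\|\cdot\|_{{\bf A}_\varepsilon}$ entering the $K$-functional depends on $a$ only through the nonnegative scalar sequence $b=\{b_{k_1 k_2}\}$ and is monotone with respect to it. Since $l_\infty$ is the common inner space at both endpoints, a near-optimal decomposition $a=\sum_{\varepsilon\in E}a_\varepsilon$ may be chosen coordinatewise in $(j_1,j_2)$, giving
$$
K(t_1,t_2,a;{\bf A_0},{\bf A_1})\asymp K(t_1,t_2,b;{\bf B_0},{\bf B_1}),
$$
where ${\bf B}_\varepsilon$ are the scalar weighted mixed-norm spaces $l_{\bar q_\varepsilon}^{\bar\sigma_\varepsilon}$ on the lattice $\mathbb{Z}_+^2$. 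This standard lattice retraction is exactly where the hypothesis $A=l_\infty$ is used.

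\emph{The anisotropic $K$-functional.} Next I would compute $K(t_1,t_2,b)$ for the scalar couple. Because the weight $2^{\sigma_1 k_1+\sigma_2 k_2}$ factors as $2^{\sigma_1 k_1}2^{\sigma_2 k_2}$ and the mixed norm applies the $k_1$-summation inside the $k_2$-summation, the four-fold infimum over $\varepsilon\in E$ splits: one first resolves the decomposition in the $k_1$ direction at scale $t_1$ and then in the $k_2$ direction at scale $t_2$. Using the ordering $\bar\sigma_0>\bar\sigma_1$, which makes each one-dimensional weighted couple nondegenerate, this yields a Holmstedt-type factorization of $K(t_1,t_2,b)$ in terms of the two one-dimensional weighted $K$-functionals for the couples $(l_{q_1^0}^{\sigma_1^0},l_{q_1^1}^{\sigma_1^1})$ and $(l_{q_2^0}^{\sigma_2^0},l_{q_2^1}^{\sigma_2^1})$, where each $l_{q}^{\sigma}$ denotes the weighted sequence space with weight $2^{\sigma k}$.

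\emph{Separation of the integrals.} Substituting this factorized expression into the defining double integral for $\|\cdot\|_{A_{\bar\theta,\bar q}}$ and applying Fubini, the integrations in $t_1$ and $t_2$ separate. In each variable I would invoke the classical one-dimensional identity $(l_{q^0}^{\sigma^0},l_{q^1}^{\sigma^1})_{\theta,q}=l_q^\sigma$ with $\sigma=(1-\theta)\sigma^0+\theta\sigma^1$, valid for $\sigma^0\neq\sigma^1$ and any $0<q\leq\infty$ (the endpoint exponents $q^0,q^1$ play no role in the outcome). Collecting the two resulting weighted factors reproduces the weight $2^{\sigma_1 k_1+\sigma_2 k_2}$ with $\bar\sigma=(1-\bar\theta)\bar\sigma_0+\bar\theta\bar\sigma_1$ and the mixed exponent $\bar q$, which is precisely $\|b\|_{l_{\bar q}^{\bar\sigma}}$; restoring the inner $l_\infty$ gives the claimed equivalence, with both inclusions coming at once from the two-sided $K$-functional estimate.

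\emph{Main obstacle.} The delicate step is the second one. Unlike the one-parameter Holmstedt formula, the anisotropic $K$-functional is defined through a simultaneous infimum over four summands, and one must verify that the optimal decomposition (up to constants) can be performed independently in the two coordinate directions. The product structure of the weights together with the componentwise ordering $\bar\sigma_0>\bar\sigma_1$ is exactly what guarantees this separation; without strict ordering in each coordinate the corresponding one-dimensional couple degenerates and the factorization fails, which is why the hypothesis $\bar\sigma_0>\bar\sigma_1$ cannot be relaxed.
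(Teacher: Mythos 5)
Your proof targets a statement the paper itself does not prove: Theorem \ref{BKA} is imported verbatim from \cite{Bekmaganbetov} (``which we formulate for our case, when $A=l_\infty$''), and the only information the paper gives about its proof is the remark that it ``essentially used the property of ideality of the spaces $l^{\bar{\sigma}}_{\bar{q}}(l_\infty)$''. So a line-by-line comparison is impossible; I can only assess your argument on its merits. Your first step is exactly the ideality ingredient the paper alludes to, and it is correct -- in fact you get equality rather than equivalence of $K$-functionals: given any scalar decomposition $b=\sum_{\varepsilon\in E}b_\varepsilon$ of $b_{k_1k_2}=\sup_{j_1,j_2}|a^{j_1j_2}_{k_1k_2}|$, setting $a_\varepsilon(k_1,k_2,j_1,j_2)=a(k_1,k_2,j_1,j_2)\,b_\varepsilon(k_1,k_2)/b(k_1,k_2)$ gives $\sup_{j_1,j_2}|a_\varepsilon|=|b_\varepsilon|$, so $K(t_1,t_2,a)=K(t_1,t_2,b)$ exactly.

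The gap is your second step, and it is not merely a technical omission. A ``Holmstedt-type factorization of $K(t_1,t_2,b)$ in terms of the two one-dimensional $K$-functionals'' does not type-check: applying the $k_1$-couple $K$-functional to the rows of $b$ produces a function of $(t_1,k_2)$, and there is no couple in the $k_2$-variable to which the second one-dimensional $K$-functional could then be applied. More fundamentally, the anisotropic functor $(\cdot,\cdot)_{\bar{\theta},\bar{q}}$ defined through the four-space infimum is \emph{not} the iteration of two one-parameter functors; avoiding precisely this false reduction is the reason the Sparr--Fernandez multiparameter theory that the paper invokes exists at all. Consequently ``apply Fubini and invoke the classical one-dimensional identity in each variable'' is not an argument but a restatement of the theorem, made worse by the fact that with mixed exponents $(q_1^{\varepsilon_1},q_2^{\varepsilon_2})$ the double integral does not separate. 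What your (correct) intuition about product weights and strict componentwise ordering should be converted into is an asymmetric treatment of the two inclusions. For $(\cdot,\cdot)_{\bar{\theta},\bar{q}}\subseteq l^{\bar{\sigma}}_{\bar{q}}$ no factorization is needed: every ${\bf A}_\varepsilon$-norm dominates the weighted sup-norm, whence for any decomposition
$$
K(t_1,t_2,b)\;\geq\;\sup_{k_1,k_2}|b_{k_1k_2}|\min_{\varepsilon\in E}\bigl(t_1^{\varepsilon_1}t_2^{\varepsilon_2}2^{\sigma_1^{\varepsilon_1}k_1+\sigma_2^{\varepsilon_2}k_2}\bigr),
$$
and evaluating at $t_i=2^{(\sigma_i^0-\sigma_i^1)m_i}$, $(k_1,k_2)=(m_1,m_2)$ yields $t_1^{-\theta_1}t_2^{-\theta_2}K(t_1,t_2,b)\gtrsim 2^{\sigma_1m_1+\sigma_2m_2}|b_{m_1m_2}|$; integration over dyadic blocks gives the embedding. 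For the reverse inclusion one uses the quadrant truncation $b=\sum_{\varepsilon}b\,\chi_{R_\varepsilon(m_1,m_2)}$ (with $R_\varepsilon$ cut by $k_i\leq m_i$ versus $k_i>m_i$) only as an \emph{upper} bound for $K$, and then proves four two-parameter discrete Hardy-type inequalities with product kernels; it is there, not in any factorization of $K$, that the exponents $\bar{q}_0,\bar{q}_1$ get absorbed and that the strict componentwise inequality $\bar{\sigma}_0>\bar{\sigma}_1$ is consumed. Your sketch names the right raw materials, but the reduction to the one-dimensional theorem as you state it would not survive being written out.
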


The following interpolation theorem holds for anisotropic net spaces
\begin{theorem}\label{IT}
Let $M$ is the set of all rectangles in $\mathbb{R}^2$,  $1<\bar{p}_0<\bar{p}_1<\infty$, $1\leq\bar{q}_0,\bar{q},\bar{q}_1\leq\infty$, $0<\bar{\theta}=(\theta_1,\theta_2)<1$ then
\begin{equation*}
(N_{\bar{p}_0,\bar{q}_0}(M), N_{\bar{p}_1,\bar{q}_1}(M))_{\bar{\theta},\bar{q}}=N_{\bar{p},\bar{q}}(M),
\end{equation*}
where $\frac{1}{\bar{p}}=\frac{1-\bar{\theta}}{\bar{p}_0}+\frac{\bar{\theta}}{\bar{p}_1}$.
\end{theorem}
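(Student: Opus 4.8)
The plan is to realise each net space $N_{\bar p,\bar q}(M)$ as the image, under a fixed bounded operator, of the corresponding sequence space $l^{\bar\sigma}_{\bar q}(l_\infty)$, and then to transport the interpolation identity of Theorem \ref{BKA} across this correspondence. Introduce the analysis operator $T$, sending a function to its Fourier-Haar coefficient array, $Tf=\{a^{j_1j_2}_{k_1k_2}(f)\}$, and the synthesis operator $S$, sending an array to the sum of the associated double Haar series, $Sa=\sum_{k_1,k_2,j_1,j_2} a^{j_1j_2}_{k_1k_2}\chi^{j_1}_{k_1}\chi^{j_2}_{k_2}$. Set $\bar\sigma_i=\tfrac12-\tfrac1{\bar p_i}$ for $i=0,1$ and $\bar\sigma=\tfrac12-\tfrac1{\bar p}$.

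First I would record, for each endpoint $i=0,1$ separately, that Theorem \ref{NPQ} (which applies since $1<\bar p_i<\infty$) gives the two-sided bound $\|f\|_{N_{\bar p_i,\bar q_i}(M)}\asymp\|Tf\|_{l^{\bar\sigma_i}_{\bar q_i}(l_\infty)}$; applying the same theorem to $f=Sa$ and using the orthonormality of the tensor Haar system (so that $T(Sa)=a$) yields $\|Sa\|_{N_{\bar p_i,\bar q_i}(M)}\asymp\|a\|_{l^{\bar\sigma_i}_{\bar q_i}(l_\infty)}$. Thus $T\colon N_{\bar p_i,\bar q_i}(M)\to l^{\bar\sigma_i}_{\bar q_i}(l_\infty)$ and $S\colon l^{\bar\sigma_i}_{\bar q_i}(l_\infty)\to N_{\bar p_i,\bar q_i}(M)$ are bounded at both endpoints, with $TS=\mathrm{id}$ and $ST=\mathrm{id}$. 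Since the operators $T,S$ do not depend on the indices, they act as mutually inverse morphisms of the Banach couples $(N_{\bar p_0,\bar q_0}(M),N_{\bar p_1,\bar q_1}(M))$ and $(l^{\bar\sigma_0}_{\bar q_0}(l_\infty),l^{\bar\sigma_1}_{\bar q_1}(l_\infty))$; in particular both couples are compatible and $T$ is an isomorphism of couples.

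Because an isomorphism of Banach couples induces an isomorphism of the corresponding interpolation spaces, this gives
$$
(N_{\bar p_0,\bar q_0}(M),N_{\bar p_1,\bar q_1}(M))_{\bar\theta,\bar q}=S\Big((l^{\bar\sigma_0}_{\bar q_0}(l_\infty),l^{\bar\sigma_1}_{\bar q_1}(l_\infty))_{\bar\theta,\bar q}\Big)
$$
with equivalent norms. Now I would invoke Theorem \ref{BKA}: after relabelling the couple so that the larger exponent comes first (here $\bar\sigma_0<\bar\sigma_1$ because $\bar p_0<\bar p_1$, so one uses the symmetry $(\mathbf A_0,\mathbf A_1)_{\bar\theta,\bar q}=(\mathbf A_1,\mathbf A_0)_{\mathbf{1}-\bar\theta,\bar q}$), it identifies the inner interpolation space with $l^{\bar\sigma}_{\bar q}(l_\infty)$, where $\bar\sigma=(1-\bar\theta)\bar\sigma_0+\bar\theta\bar\sigma_1=\tfrac12-\big(\tfrac{1-\bar\theta}{\bar p_0}+\tfrac{\bar\theta}{\bar p_1}\big)=\tfrac12-\tfrac1{\bar p}$. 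Applying Theorem \ref{NPQ} one last time, now with the exponent $\bar p$, turns $S(l^{\bar\sigma}_{\bar q}(l_\infty))$ back into $N_{\bar p,\bar q}(M)$, which is the desired equality.

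The steps that require genuine care, and which I regard as the main obstacle, are the two facts underpinning the couple isomorphism: that synthesis reproduces the function, $ST=\mathrm{id}$ on each net space (i.e.\ the double Fourier-Haar series of $f$ converges back to $f$ in the ambient Hausdorff space in which the couples are compatibly embedded), and that the functoriality of the interpolation construction—bounded morphisms of couples induce bounded maps between the interpolation spaces—holds for the two-parameter real method defined by the anisotropic $K$-functional $K(t,a;\mathbf A_0,\mathbf A_1)$ with $t=(t_1,t_2)$. The latter is not literally the textbook single-parameter statement, so I would verify it directly from the definition of $K$, checking that the norm inequalities for $T$ and $S$ pass to the $K$-functionals coordinate by coordinate; the remaining steps are routine bookkeeping with the exponents.
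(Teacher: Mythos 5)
Your proposal has a fatal structural flaw in the context of this paper: it is circular. Note first that the paper does not prove Theorem \ref{IT} at all --- it is quoted from \cite{BashNurs}, with the explicit remark that the proof there must use ``other ideas'' than Theorem \ref{BKA}, precisely because the net spaces $N_{\bar{p},\bar{q}}(M)$ are not ideal. More importantly, your argument takes Theorem \ref{NPQ} as its engine, but in this paper Theorem \ref{NPQ} is \emph{deduced from} Theorem \ref{IT}: both halves of the proof of Theorem \ref{NPQ} establish only endpoint bounds for the analysis operator ($\bar{q}=\bar{\infty}$) and the synthesis operator ($\bar{q}=\bar{1}$), and then invoke the interpolation identities of Theorem \ref{BKA} \emph{and} Theorem \ref{IT} to pass to general $\bar{q}$. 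So using Theorem \ref{NPQ} at both endpoints and at $\bar{p}$ to derive Theorem \ref{IT} assumes exactly what is to be proved. To rescue your route you would have to first prove the full two-sided equivalence $\|f\|_{N_{\bar{p},\bar{q}}(M)}\asymp\|a(f)\|_{l^{\bar{\sigma}}_{\bar{q}}(l_\infty)}$ for all $\bar{q}$ by some interpolation-free argument, which is essentially as hard as (and in this paper is treated as a consequence of) the theorem you are trying to prove.

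Two further points would block the argument even if the circularity were repaired. First, Theorem \ref{IT} is stated for $M$ the set of all rectangles in $\mathbb{R}^2$, i.e.\ for net spaces of functions on $\mathbb{R}^2$; the Haar system lives on $[0,1]^2$, so your retract construction could at best cover the $[0,1]^2$ case and cannot yield the theorem as stated. Second, the identity $ST=\mathrm{id}$ --- that the double Fourier--Haar series of an arbitrary $f\in N_{\bar{p}_i,\bar{q}_i}(M)$ converges back to $f$ --- is exactly what you flag as ``the main obstacle,'' and it is nowhere available: the paper's proof of Theorem \ref{NPQ} only shows that partial sums of a given coefficient array converge to \emph{some} element of $N_{\bar{p},\bar{q}}(M)$, not that they reproduce $f$. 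Without $ST=\mathrm{id}$ you have no isomorphism of couples, only one-sided factorizations, and the transported identity $(N_{\bar{p}_0,\bar{q}_0},N_{\bar{p}_1,\bar{q}_1})_{\bar{\theta},\bar{q}}=N_{\bar{p},\bar{q}}(M)$ does not follow. The actual proof, per the paper, must proceed by estimating the anisotropic $K$-functional for the net-space couple directly, as is done in \cite{BashNurs}.
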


This theorem was proved in \cite{BashNurs}. The method of proving theorem \ref{IT} differs from the proof of theorem \ref{BKA}. In the proof of Theorem \ref{BKA}, the property of ideality of the spaces $l_{\bar{q}}^{\bar{\sigma}} (l_\infty)$ was essentially used, and the spaces $N_{\bar{p},\bar{q}}(M)$ are not ideal. Therefore, the proof uses other ideas.

\section{Proof of the theorem \ref{NPQ}}

Let function $f\in N_{\bar{p},\bar{\infty}}(M)$, $a(f)=\{a_{k_1k_2}^{j_1j_2}: k_i\in\mathbb{Z}_+,1\leq j\leq2^{k_i}, i=1,2\}$ are its Fourier coefficients by the Haar system.
Let us prove the inequality 
\begin{equation}
\left\|a(f)\right\|_{l^{\bar{\sigma}}_{\bar{\infty}}(l_\infty)}\leq C\left\|f\right\|_{N_{\bar{p},\bar{\infty}}(M)},
\end{equation}
where $\bar{\sigma}=\frac{1}{2}-\frac{1}{\bar{p}}$, $1<\bar{p}<\infty$.

By the definition of the space $l_{\bar{p}}^{\bar{\sigma}}(l_\infty)$:
$$
\|a(f)\|_{l^{\bar{\sigma}}_{\bar{\infty}}(l_\infty)}=\sup_{k_i\geq 0}2^{k_1\left(\frac{1}{2}-\frac{1}{p_1}\right)+k_2\left(\frac{1}{2}-\frac{1}{p_2}\right)}\max_{1\leq j_i\leq2^{k_i}}|a^{j_1,j_2}_{k_1,k_2}|.
$$

Note, that from the definition of the Fourier-Haar coefficients, we have
\begin{equation*}
\begin{split}
&a_{k_1,k_2}^{j_1,j_2}(f)=2^{\frac{k_1}{2}+\frac{k_2}{2}}\left[\int_{(\bigtriangleup_{k_1}^{j_1})^+}\int_{(\bigtriangleup_{k_2}^{j_2})^+}f(x_1,x_2)dx_1dx_2-\int_{(\bigtriangleup_{k_1}^{j_1})^-}\int_{(\bigtriangleup_{k_2}^{j_2})^+}f(x_1,x_2)dx_1dx_2-\right.\\
&\left.-\int_{(\bigtriangleup_{k_1}^{j_1})^+}\int_{(\bigtriangleup_{k_2}^{j_2})^-}f(x_1,x_2)dx_1dx_2+\int_{(\bigtriangleup_{k_1}^{j_1})^-}\int_{(\bigtriangleup_{k_2}^{j_2})^-}f(x_1,x_2)dx_1dx_2\right],
\end{split}
\end{equation*}
where 
$$
(\bigtriangleup_{k_1}^{j_1})^+= \left(\frac{2j_1-2}{2^{k_1+1}}; \frac{2j_1-1}{2^{k_1+1}}\right),\;\;
(\bigtriangleup_{k_1}^{j_1})^-=\left(\frac{2j_1-1}{2^{k_1+1}}; \frac{2j_1}{2^{k_1+1}}\right),
$$
$$
(\bigtriangleup_{k_2}^{j_2})^+= \left(\frac{2j_2-2}{2^{k_2+1}}; \frac{2j_2-1}{2^{k_2+1}}\right),\;\;
(\bigtriangleup_{k_2}^{j_2})^-=\left(\frac{2j_2-1}{2^{k_2+1}}; \frac{2j_2}{2^{k_2+1}}\right).
$$

Then, given that the lengths of the segments $|(\bigtriangleup)^+|=|(\bigtriangleup)^-|=\frac{1}{2^{k_i+1}}$, we get
\begin{equation*}
\begin{split}
&\|a(f)\|_{l^{\bar{\sigma}}_{\bar{\infty}}(l_\infty)}\leq 4\cdot2^{-\frac{1}{p'_1}-\frac{1}{p'_2}}\sup_{Q_1\times Q_2\in M}\frac{1}{|Q_1|^\frac{1}{p'_1}}\frac{1}{|Q_2|^\frac{1}{p'_2}}\left|\int_{Q_1}\int_{Q_2}f(x_1,x_2)dx_1dx_2\right|=\\
&=2^{\frac{1}{p_1}+\frac{1}{p_2}}\|f\|_{N_{\bar{p},\bar{\infty}}(M)}.
\end{split}
\end{equation*}

We define the operator $Tf=\{a_{k_1,k_2}^{j_1,j_2}(f)$. Let $\bar{p}$ satisfies the condition of the theorem and $\bar{p}_0$, $\bar{p}_1$, $\bar{\sigma}_0$, $\bar{\sigma}_1$ are such that $1<\bar{p}_0<\bar{p}<\bar{p}_1<\infty$, where $\bar{p}_0=(p_1^0,p_2^0)$, $\bar{p}_1=(p_1^1,p_2^1)$ and $\sigma_1^0=\frac{1}{2}-\frac{1}{p_1^0}, \sigma_2^0=\frac{1}{2}-\frac{1}{p_2^0},\sigma_1^1=\frac{1}{2}-\frac{1}{p_1^1},\sigma_2^1=\frac{1}{2}-\frac{1}{p_2^1}$.

Then it follows from the last inequality, that for a given operator:
$$
T: N_{(p_1^0, p_2^0),\infty}(M)\rightarrow l_{\bar{\infty}}^{(\sigma_1^0,\sigma_2^0)}(l_\infty),
$$
$$
T: N_{(p_1^0, p_2^1),\infty}(M)\rightarrow l_{\bar{\infty}}^{(\sigma_1^0, \sigma_2^1)}(l_\infty),
$$
$$
T: N_{(p_1^1, p_2^0),\infty}(M)\rightarrow l_{\bar{\infty}}^{(\sigma_1^1, \sigma_2^0)}(l_\infty),
$$
$$
T: N_{(p_1^1, p_2^1), \infty}(M)\rightarrow l_{\bar{\infty}}^{(\sigma_1^1,\sigma_2^1)}(l_\infty).
$$

Therefore, 
$$
T: \left(N_{\bar{p}_0, \bar{\infty}}(M), N_{\bar{p}_1, \bar{\infty}}(M)\right)_{\bar{\theta},\bar{q}}\rightarrow \left(l_{\bar{\infty}}^{\bar{\sigma}_0}(l_\infty), l_{\bar{\infty}}^{\bar{\sigma}_1}(l_\infty)\right)_{\bar{\theta},\bar{q}},
$$
 
According to the interpolation theorems \ref{BKA} and \ref{IT}, we have:
$$
\left(l_{\bar{\infty}}^{\bar{\sigma}_0}(l_\infty), l_{\bar{\infty}}^{\bar{\sigma}_1}(l_\infty)\right)_{\bar{\theta},\bar{q}}=l^{\bar{\sigma}}_{\bar{q}}(l_\infty), \;\;\; \left(N_{\bar{p}_0, \infty}(M), N_{\bar{p}_1, \infty}(M)\right)_{\bar{\theta},\bar{q}}=N_{\bar{p},\bar{q}}(M).
$$

Therefore, $T: N_{\bar{p},\bar{q}}(M)\rightarrow l^{\bar{\sigma}}_{\bar{q}}(l_\infty)$.

Thus, we get
$$
\|a(f)\|_{l^{\bar{\sigma}}_{\bar{q}}(l_\infty)} \leq C\|f\|_{N_{\bar{p},\bar{q}}(M)},
$$
where $\frac{1}{\bar{p}}=\frac{1-\bar{\theta}}{\bar{p}_0}+\frac{\bar{\theta}}{\bar{p}_1}$, $\bar{\sigma}=(1-\bar{\theta})\bar{\sigma}_0+\bar{\theta}\bar{\sigma}_1=\frac{1}{2}-\frac{1}{\bar{p}}$.

Let us show the reverse inequality. Let $N_1,N_2 \in\mathbb{N}$, $a=\{a_{k_1k_2}^{j_1j_2}: k_i\in\mathbb{Z}_+,1\leq j\leq2^{k_i}, i=1,2\} \in l^{\bar{\sigma}}_{\bar{q}}(l_\infty)$. Consider a polynomial
$$
S_{N_1,N_2}(a;x_1,x_2)=\sum_{k_1=0}^{N_1}\sum_{k_2=0}^{N_2}\sum_{j_1=1}^{2^{k_1}}\sum_{j_2=1}^{2^{k_2}}a_{k_1,k_2}^{j_1,j_2}\chi_{k_1}^{j_1}(x_1)\chi_{k_2}^{j_2}(x_2).
$$

$Q=Q_1\times Q_2$ an arbitrary rectangle from the net $M$. Then
\begin{equation*}
\begin{split}
&\frac{1}{|Q_1|^\frac{1}{p'_1}|Q_2|^\frac{1}{p'_2}}\left|\int_{Q_2}\int_{Q_1}S_{N_1,N_2}(a;x_1,x_2)dx_1dx_2\right|\\
&\leq\sum_{k_1=0}^{N_1}\sum_{k_2=0}^{N_2}\frac{1}{|Q_1|^\frac{1}{p'_1}|Q_2|^\frac{1}{p'_2}}\left|\int_{Q_2}\int_{Q_1}\sum_{j_1=1}^{2^{k_1}}\sum_{j_2=1}^{2^{k_2}}a^{j_1,j_2}_{k_1,k_2}\chi^{j_1}_{k_1}(x_1)\chi^{j_2}_{k_2}(x_2)dx_1dx_2\right|\\
&=\sum_{k_1=0}^{N_1}\sum_{k_2=0}^{N_2}\frac{1}{|Q_1|^\frac{1}{p'_1}|Q_2|^\frac{1}{p'_2}}\left|\sum_{j_1=1}^{2^{k_1}}\sum_{j_2=1}^{2^{k_2}}a^{j_1,j_2}_{k_1,k_2}\int_{Q_2}\chi^{j_2}_{k_2}(x_2)dx_2\int_{Q_1}\chi^{j_1}_{k_1}(x_1)dx_1\right|.
\end{split}
\end{equation*}

From the definitions of the functions $\chi^{j_1}_{k_1}(x_1)$ and $\chi^{j_2}_{k_2}(x_2)$ it follows that at most four terms in the sum 
$$
\sum_{j_1=1}^{2^{k_1}}\sum_{j_2=1}^{2^{k_2}}a^{j_1,j_2}_{k_1,k_2}\int_{Q_2}\chi^{j_2}_{k_2}(x_2)dx_2\int_{Q_1}\chi^{j_1}_{k_1}(x_1)dx_1
$$ 
are nonzero, namely, those terms where the supports of the functions $\chi^{j_1}_{k_1}(x_1)$, $\chi^{j_2}_{k_2}(x_2)$ contain, respectively, the ends of the segments $Q_1$, $Q_2$. Therefore,
\begin{equation*}
\begin{split}
&\frac{1}{|Q_1|^\frac{1}{p'_1}|Q_2|^\frac{1}{p'_2}}\left|\int_{Q_2}\int_{Q_1}S_{N_1,N_2}(a; x_1,x_2)dx_1dx_2\right|\leq\\
&\leq 4\sum_{k_1=0}^{N_1}\sum_{k_2=0}^{N_2}\frac{1}{|Q_1|^\frac{1}{p'_1}|Q_2|^\frac{1}{p'_2}}\max_{{}^{1\leq j_i\leq 2^{k_i}}_{i=1,2}}|a^{j_1,j_2}_{k_1,k_2}|2^{\frac{k_1}{2}+\frac{k_2}{2}}\min\left(|Q_1|,\frac{1}{2^{k_1}}\right)\min\left(|Q_2|,\frac{1}{2^{k_2}}\right).
\end{split}
\end{equation*}

Note, that 
$$
2^\frac{k_i}{2}\frac{1}{|Q_i|^\frac{1}{p'_i}}\min\left(|Q_i|,\frac{1}{2^{k_i}}\right)\leq 2^{k_i\left(\frac{1}{2}-\frac{1}{p_i}\right)},
$$
therefore,
$$
\frac{1}{|Q_1|^\frac{1}{p'_1}|Q_2|^\frac{1}{p'_2}}\left|\int_{Q_2}\int_{Q_1}S_{N_1,N_2}(a; x_1,x_2)dx_1dx_2\right|\leq 4\sum_{k_1=0}^{N_1}\sum_{k_2=0}^{N_2}2^{k_1\left(\frac{1}{2}-\frac{1}{p_1}\right)+k_2\left(\frac{1}{2}-\frac{1}{p_2}\right)}\max_{{}^{1\leq j_i\leq2^{k_i}}_{i=1,2}}|a^{j_1,j_2}_{k_1,k_2}|.
$$

Since the choice of the segments $Q_1$ and $Q_2$ is arbitrary, we obtain
$$
\|S_{N_1,N_2}(a;x_1,x_2)\|_{N_{\bar{p},\bar{\infty}}(M)}=\sup_{{}^{t_i>0}_{i=1,2}}{t_1}^\frac{1}{p_1}{t_2}^\frac{1}{p_2}\sup_{{}^{|Q_i|\geq t_i}_{i=1,2}}\frac{1}{|Q_1||Q_2|}\int_{Q_2}\int_{Q_1}S_{N_1,N_2}(a;x_1,x_2)dx_1dx_2
$$
$$
\leq\sup_{Q_1\times Q_2\in M}\frac{1}{|Q_1|^\frac{1}{p'_1}|Q_2|^\frac{1}{p'_2}}\left|\int_{Q_2}\int_{Q_1}S_{N_1,N_2}(a;x_1,x_2)dx_1dx_2\right|\leq 4\|a\|_{l^{\bar{\sigma}}_{\bar{1}}(l_\infty)}.
$$

Let $\bar{p}_0, \bar{p}_1, \bar{\sigma}_0, \bar{\sigma}_1$ such, that $1<\bar{p}_0<\bar{p}<\bar{p}_1<\infty$, $\bar{\sigma}_0=\frac{1}{2}-\frac{1}{\bar{p}_0}$, $\bar{\sigma}_1=\frac{1}{2}-\frac{1}{\bar{p}_1}$. Consider the operator $Ta=S_{N_1,N_2}(a; x_1,x_2)$. It follows from the last inequality that for this operator
$$
T: l_{\bar{1}}^{(\sigma_1^0,\sigma_2^0)}(l_\infty)\rightarrow N_{(p_1^0, p_2^0),\bar{\infty}}(M), 
$$
$$
T: l_{\bar{1}}^{(\sigma_1^0,\sigma_2^1)}(l_\infty)\rightarrow N_{(p_1^0,p_2^1), \bar{\infty}}(M), 
$$
$$
T: l_{\bar{1}}^{(\sigma_1^1,\sigma_2^0)}(l_\infty)\rightarrow N_{(p_1^1,p_2^0),\bar{\infty}}(M), 
$$
$$
T: l_{\bar{1}}^{(\sigma_1^1,\sigma_2^1)}(l_\infty)\rightarrow N_{(p_1^1, p_2^1),\bar{\infty}}(M).
$$

Then
$$
T: \left(l_{\bar{1}}^{\bar{\sigma}_0}(l_\infty), l_{\bar{1}}^{\bar{\sigma}_1}(l_\infty)\right)_{\bar{\theta},\bar{q}}\rightarrow \left(N_{\bar{p}_0, \bar{\infty}}(M), N_{\bar{p}_1, \bar{\infty}}(M)\right)_{\bar{\theta},\bar{q}},
$$

Hence, we have
$$
T: l_{\bar{q}}^{\bar{\sigma}}(l_\infty)\rightarrow N_{\bar{p},\bar{q}}(M),
$$
where $\frac{1}{\bar{p}}=\frac{1-\bar{\theta}}{\bar{p}_0}+\frac{\bar{\theta}}{\bar{p}_1}$, $\bar{\sigma}=(1-\bar{\theta})\bar{\sigma}_0+\bar{\theta}\bar{\sigma}_1=\frac{1}{2}-\frac{1}{\bar{p}}$ 
and therefore, the inequality holds
$$
\|S_{N_1,N_2}(a; x_1,x_2)\|_{N_{\bar{p},\bar{q}}(M)}\leq C\|a\|_{l^{\bar{\sigma}}_{\bar{q}}(l_\infty)}.
$$

Next, we use the fact that the space $N_{\bar{p},\bar{q}}(M)$ is a Banach space (see \cite{E.Nurs}) and therefore $S_{N_1,N_2}(a; x_1,x_2)$ converges to some function $f\in N_{\bar{p},\bar{q}}(M)$ for $N_1,N_2\rightarrow +\infty$.

\section{Proof of the theorem \ref{LP}}

First, we give a lemma.
\begin{lemma}\label{LT2}
Let $1<p<\infty$, $\varphi\in L_p[0,1]$, then
$$
\|\varphi\|_{L_p[0,1]}\asymp\left(\sum_{k=0}^\infty\left(2^{-\frac{k}{p}}\varphi^{**}(2^k)\right)^p\right)^\frac{1}{p},
$$
where 
$$
\varphi^{**}(t)=\frac{1}{t}\int_0^t\varphi^{*}(s)ds=\sup_{|e|=t}\frac{1}{|e|}\int_e|\varphi(x)|dx.
$$
\end{lemma}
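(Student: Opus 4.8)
The plan is to pass from $\|\varphi\|_{L_p[0,1]}$ to the maximal function $\varphi^{**}$ and then to discretize by a dyadic decomposition of $(0,1]$. Throughout I would use two elementary properties of $\varphi^{**}$: it is non-increasing on $(0,1]$, and it satisfies the doubling bound $\varphi^{**}(t/2)\le 2\,\varphi^{**}(t)$, both immediate from $\varphi^{**}(t)=\frac1t\int_0^t\varphi^{*}(s)\,ds$ and the monotonicity of $\varphi^{*}$. I would also use $\varphi^{*}(t)\le\varphi^{**}(t)$ (an average of a non-increasing function dominates the function) together with the Hardy inequality $\int_0^1\varphi^{**}(t)^p\,dt\le\left(\tfrac{p}{p-1}\right)^p\int_0^1\varphi^{*}(t)^p\,dt$, valid precisely because $p>1$. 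Since $\|\varphi\|_{L_p[0,1]}=\|\varphi^{*}\|_{L_p[0,1]}$ and $\varphi^{*}$ is supported on $[0,1]$, these combine to give the two-sided comparison $\|\varphi\|_{L_p[0,1]}^p\asymp\int_0^1\varphi^{**}(t)^p\,dt$.

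Next I would discretize. Writing $(0,1]=\bigcup_{k\ge0}\left(2^{-k-1},2^{-k}\right]$ and using monotonicity together with the doubling bound, on each dyadic block one has $\varphi^{**}(2^{-k})\le\varphi^{**}(t)\le\varphi^{**}(2^{-k-1})\le 2\,\varphi^{**}(2^{-k})$, whence $\int_{2^{-k-1}}^{2^{-k}}\varphi^{**}(t)^p\,dt\asymp 2^{-k}\varphi^{**}(2^{-k})^p=\left(2^{-k/p}\varphi^{**}(2^{-k})\right)^p$. Summing over $k\ge0$ and invoking the previous step yields
$$
\|\varphi\|_{L_p[0,1]}^p\asymp\sum_{k=0}^\infty\left(2^{-k/p}\varphi^{**}(2^{-k})\right)^p,
$$
which is the asserted equivalence; note that the relevant dyadic points are $2^{-k}\in(0,1]$, i.e. exactly those at which the average in the definition of $\varphi^{**}$ is meaningful, since $|e|=t$ forces $t\le 1$.

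The only genuinely nontrivial ingredient is the upper estimate $\int_0^1\varphi^{**}(t)^p\,dt\lesssim\|\varphi\|_{L_p}^p$, that is, the $L_p$-boundedness of the Hardy averaging operator $\varphi^{*}\mapsto\varphi^{**}$. This is where the hypothesis $p>1$ is indispensable, the estimate failing at $p=1$; I expect this to be the main obstacle, and would either cite the classical Hardy inequality directly or reprove it through the one-dimensional Hardy operator bound. The matching lower bound $\int_0^1\varphi^{**}(t)^p\,dt\ge\int_0^1\varphi^{*}(t)^p\,dt=\|\varphi\|_{L_p}^p$ is trivial, and the block-by-block comparison uses only monotonicity and doubling, so all remaining steps are routine.
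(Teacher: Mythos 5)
Your proof is correct and follows essentially the same route as the paper's: both pass from $\|\varphi\|_{L_p}$ to $\bigl(\int_0^1(\varphi^{**}(t))^p\,dt\bigr)^{1/p}$ via the Hardy inequality (which is where $p>1$ enters), then split $(0,1]$ into dyadic blocks $(2^{-k-1},2^{-k}]$ and use monotonicity of $\varphi^{**}$ to replace each block integral by $2^{-k}\varphi^{**}(2^{-k})^p$; the paper compresses this into a single displayed chain, while you spell out the doubling bound and the Hardy step explicitly. You also correctly read $\varphi^{**}(2^k)$ in the statement as $\varphi^{**}(2^{-k})$, which is evidently a typo in the paper, since the dyadic points produced by the decomposition are $2^{-k}\in(0,1]$.
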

\begin{proof}
$$
\|\varphi\|_{L_p[0,1]}\asymp\left(\int_0^1\left(\varphi^{**}(t)\right)^pdt\right)^\frac{1}{p}=\left(\sum_{k=0}^\infty\int_{2^{-(k+1)}}^{2^{-k}}\left(\varphi^{**}(t)\right)^pdt\right)^\frac{1}{p}\asymp\left(\sum_{k=0}^\infty 2^{-k}\left(\varphi^{**}(2^k)\right)^p\right)^\frac{1}{p}.
$$
\end{proof}
{\it Proof of the theorem \ref{LP}}

Note, that since $f(x_1,x_2)$ is monotonic, we have
$$
|a_{k_1k_2}^{j_1j_2}|=\left|\int_{\Delta_{k_2}^{j_2}}\int_{\Delta_{k_1}^{j_1}}f(x_1,x_2)\chi_{k_1k_2}^{j_1j_2}(x_1,x_2)dx_1dx_2\right|\leq
$$
$$
\leq 2^{\frac{k_1}{2}+\frac{k_2}{2}}\int_{\Delta_{k_2}^{j_2}}\int_{\Delta_{k_1}^{j_1}}|f(x_1,x_2)|dx_1dx_2\leq
$$
$$
\leq 2^{\frac{k_1}{2}+\frac{k_2}{2}}\int_0^{2^{-k_2}}\int_0^{2^{-k_1}}f(x_1,x_2)dx_1dx_2.
$$

Then from this estimate, the Minkowski inequalities and from the lemma \ref{LT2}, imply
\begin{equation*}
\begin{split}
&\|a(f)\|_{l_{\bar{p}}^{\bar{\sigma}}(l_\infty)}\leq \left(\sum_{k_2=0}^\infty\left(\sum_{k_1=0}^\infty\left(2^{\left(1-\frac{1}{p_1}\right)k_1+\left(1-\frac{1}{p_2}\right)k_2}\int_0^{2^{-k_2}}\int_0^{2^{-k_1}}f(x_1,x_2)dx_1dx_2\right)^{p_1}\right)^\frac{p_2}{p_1}\right)^\frac{1}{p_2}\leq\\
&\leq \left(\sum_{k_2=0}^\infty\left(\sum_{k_1=0}^\infty\left(2^{\left(1-\frac{1}{p_1}\right)k_1+\left(1-\frac{1}{p_2}\right)k_2}\int_0^{2^{-k_2}}\int_0^{2^{-k_1}}f(x_1,x_2)dx_1dx_2\right)^{p_1}\right)^\frac{p_2}{p_1}\right)^\frac{1}{p_2}\leq\\
&\leq\left(\sum_{k_2=0}^\infty\left(2^{\left(1-\frac{1}{p_2}\right)k_2}\int_0^{2^{-k_2}}\left(\sum_{k_1=0}^\infty\left(2^{\left(1-\frac{1}{p_1}\right)k_1}\int_0^{2^{-k_1}}f(x_1,x_2)dx_1\right)^{p_1}\right)^\frac{1}{p_1}dx_2\right)^{p_2}\right)^\frac{1}{p_2}=\\
&=\left(\sum_{k_2=0}^\infty\left(2^{1-\frac{k_2}{p_2}}\int_0^{2^{-k_2}}\varphi(x_2)dx_2\right)^{p_2}\right)^\frac{1}{p_2}\leq \left(\sum_{k_2=0}^\infty\left(2^{-\frac{k_2}{p_2}}\varphi^{**}(2^{-k})\right)^{p_2}\right)^\frac{1}{p_2}\asymp\\
&\asymp\left(\int_0^1\left(\varphi^*(t)\right)^{p_2}\right)^\frac{1}{p_2}=\|\varphi\|_{L_{p_2}},
\end{split}
\end{equation*}
here 
$$
\varphi(x_2)=\left(\sum_{k_1=0}^\infty\left(2^{1-\frac{k_1}{p_1}}\int_0^{2^{-k_1}}|f(x_1,x_2)|dx_1\right)^{p_1}\right)^\frac{1}{p_1}.
$$

Similarly,
$$
\varphi(x_2)\leq\left(\int_0^1|f(x_1,x_2)|^{p_1}dx_1\right)^\frac{1}{p_1}.
$$

Thus, we get
$$
\|a(f)\|_{l_{\bar{p}}^{\bar{\sigma}}(l_\infty)}\leq c\|f\|_{L_{\bar{p}}[0,1]^2}.
$$

Let us show the reverse inequality. Since $f(x_1,x_2)$ is a monotonically non-increasing by each variable function, then
$$
f(x_1,x_2)\leq\frac{1}{x_1x_2}\int_0^{x_1}\int_0^{x_2}f(y_1,y_2)dy_1dy_2\leq\bar{f}(x_1,x_2;M).
$$

Therefore, from theorem \ref{NPQ} it follows
$$
\|f\|_{L_{\bar{p}}[0,1]^2}\leq\|f\|_{N_{\bar{p},\bar{p}}(M)}\leq c\|a(f)\|_{l_{\bar{p}}^{\bar{\sigma}}(l_\infty)}.
$$

\renewcommand{\refname}{References}

\end{document}